\numberwithin{equation}{section}
\numberwithin{figure}{section}
\theoremstyle{plain}
\newtheorem{thm}{\protect\theoremname}[section]
  \theoremstyle{plain}
  \newtheorem{lem}[thm]{\protect\lemmaname}
\newcommand{\ds}{\displaystyle}
\def\R{\mathbb R}
\def\al{\alpha}
\def\ga{\gamma}
\def\ep{\epsilon}
\def\la{\lambda}
\def\var{\varphi}
\def\Om{\Omega}
\def\pa{\partial}
\numberwithin{equation}{section}
\theoremstyle{definition}
\newtheorem{rem}[thm]{Remark}
\newtheorem{prop}[thm]{Proposition}
  \providecommand{\lemmaname}{Lemma}
\providecommand{\theoremname}{Theorem}
\begin{document}

\title{Infinitely many sign-changing solutions for an elliptic problem ]Infinitely many sign-changing solutions for an elliptic problem with double critical Hardy-Sobolev-Maz'ya
 terms}

\author{Chunhua Wang, Jing Yang}

\date{\today}

\address{[Chunhua Wang] School of Mathematics and Statistics, Central China
Normal University, Wuhan 430079, P. R. China. }

\email{[Chunhua Wang] chunhuawang@mail.ccnu.edu.cn}

\address{[Jing Yang] School of Mathematics and Statistics, Central China
Normal University, Wuhan 430079, P. R. China.}

\email{[Jing Yang] yyangecho@163.com}
\begin{abstract}
In this paper, we investigate the following elliptic problem involving double
critical Hardy-Sobolev-Maz'ya terms:
\begin{equation*}\label{0.1}
\left\{%
\begin{array}{ll}
    -\Delta u=\mu\frac{|u|^{2^{*}(t)-2}u}{|y|^{t}}+\frac{|u|^{2^{*}(s)-2}u}{|y|^{s}}+a(x) u, & \hbox{$\text{in}~ \Omega$},\vspace{0.1cm} \\
   u=0,\,\, &\hbox{$\text{on}~\partial \Omega$},
\end{array}%
\right.
\end{equation*}
where $\mu\geq0, a(x)>0, 2^{*}(t)=\frac{2(N-t)}{N-2},2^{*}(s)=\frac{2(N-s)}{N-2},0\leq
t<s<2,x=(y,z)\in \R^{k}\times \R^{N-k},2\leq k<N,(0,z^*)\in \bar{\Omega}$ and $\Omega$
is an bounded domain in $\R^{N}.$ Applying an abstract theorem in \cite{sz},
we prove that
if $N>6+t$ when $\mu>0,$ and $N>6+s$ when $\mu=0,$ and
$\Omega$ satisfies some geometric conditions,
then the above problem has infinitely many sign-changing solutions.
The main tool is to estimate Morse indices of these nodal solution.
\end{abstract}

\maketitle
{\small
\noindent {\bf Keywords:} Double critical Hardy-Sobolev-Maz'ya
terms;sign-changing
solutions; Morse index.

\smallskip

\tableofcontents{}

\section{ Introduction and the main results}
 Let $N\geq 3,\mu\geq0, 2^{*}(t)=\frac{2(N-t)}{N-2},2^{*}(s)=\frac{2(N-s)}{N-2},0\leq t<s<2$  and $\Omega$ is
an open bounded domain in $\R^{N}$ satisfying some geometric conditions
and $0<a(x)\in C^{1}(\bar{\Omega}).$
 We consider the following elliptic problem with double critical Hardy-Sobolev-Maz'ya terms:
\begin{equation}\label{1.1}
\left\{%
\begin{array}{ll}
    -\Delta u=\frac{\mu|u|^{2^{*}(t)-2}u}{|y|^{t}}+\frac{|u|^{2^{*}(s)-2}u}{|y|^{s}}+a(x)u, & \hbox{$\text{in}~ \Omega$},\vspace{0.1cm} \\
   u=0,\,\, &\hbox{$\text{on}~\partial \Omega$}, \\
\end{array}%
\right.
\end{equation}
where $x=(y,z)\in \R^{k}\times \R^{N-k},2\leq k<N.$

The main interest in problem \eqref{1.1} relates to the following Hardy-Sobolev-Maz'ya inequality \cite{bs}: there exists a positive constant
$S_s(\Omega)$ such that
\begin{equation}\label{1.1.1}
\Big(\ds\int_\Omega\frac{|u|^{2^*(s)}}{|y|^{s}}dx\Big)^{\frac{2}{2^*(s)}}
\leq (S_s(\Omega))^{-1}\ds\int_\Omega|\nabla u|^2dx,\,\,\,\,\,u\in
H^{1}_{0}(\Omega).
\end{equation}

For the results on whether the optimal constant $S_s(\Omega)$ can be attained, one can refer to \cite{bs,ms,m} if $k<N$ and
 \cite{c,cw,cl} if $k=N$ and the references therein.
Due to the double Hardy-Sobolev-Maz'ya terms of problem \eqref{1.1},
in order to get a positive solution for \eqref{1.1}, one can
apply the idea of  Brezis and Nirenberg \cite{bn}, or the concentration compactness principle of Lions \cite{pl}, or the global compactness of Struwe
\cite{s}, to show that the mountain pass value is a critical value of the functional
\begin{equation}\label{1.2}
I(u)=\frac{1}{2}\int_{\Omega}\big(|\nabla u|^{2}-a(x)
u^{2}\big)dx-\frac{\mu}{2^{*}(t)}\int_{\Omega}\frac{|u|^{2^{*}(t)}}{|y|^{t}}dx
-\frac{1}{2^{*}(s)}\int_{\Omega}\frac{|u|^{2^{*}(s)}}{|y|^{s}}dx,\,\,\,u\in
H^{1}_{0}(\Omega).
\end{equation}

It is worth noting that the crucial step in the proof is to show that the mountain pass value is strictly less than the first energy level  where the
Palais Smale condition fails. For the existence of the mountain pass solution for \eqref{1.1}, one can see (\cite{awz,bs,mfs,cfms2,ms,ms1,m}).
And in the case $k=N$, one can refer to \cite{gk,gr,hl,ll,cp2,cp,GY} and the references therein.
We want to point out that in the case $0\in \partial \Omega$,
the mean curvature of $\partial \Omega$ at $0$ plays an important role in the existence of the mountain pass solution.
See for example \cite{gk,gr,hl,ll,cl}.

 Since the pioneer work of Brezis and Nirenberg \cite{bn} appeared, there are
 a number of results for problem \eqref{1.1} with $\mu=0,s=0,k=N$ which is called Brezis-Nirenberg type problem. See for example \cite{bc,cfp,css,s} and particularly the survey paper \cite{bn} and
 the references therein. Moreover, the study of sign-changing solution to some elliptic equations has been an increasing interest
 in recent years (cf.\cite{bcw, blw,bw,cmt1,cmt,lw,rtz,szw})and the references therein.
 In \cite {cp1}, Cao and Peng studied problem
 \eqref{1.1} with $t=2,s=0$. They obtained the existence of
 a pair of sign-changing solutions if $a(x)=\la>0$ and $N\geq 7$.
  In \cite{sz}, Schechter and Zou obtained that there exist infinitely many sign-changing solutions
  to the  Brezis-Nirenberg type problem in higher dimension$(N\geq 7)$.
  After the excellent work by Schechter and Zou \cite{sz}, Chen and Zou in \cite{cz1} obtained infinitely many sign-changing solutions for
problem \eqref{1.1} with the Hardy term in the case $t=2, s=0$ when $k=N.$
And very recently in \cite{g}, it was proved that \eqref{1.1} has infinitely many sign-changing solutions
 in the case $t=2,a(x)=\la>0$ and $0\in\Omega.$

Based on these results mentioned above, a natural and interesting question is whether \eqref{1.1} has infinitely many sign-changing solutions.
 As far as we know, there is no any information on that. But, in this paper, we give a positive answer and precisely, we state our
 main result as follows.

\begin{thm}\label{thm1.1}
Assume that $a(x)>0$ and
$\Omega$ is a bounded domain such that
$(x-(0,z^{*}))\cdot\nu\leq 0$ in a neighborhood of $(0,z^{*})$ if
 $(0,z^{*})\in \partial\Omega,$ where $\nu$ is the outward normal of $\partial\Omega.$
If $N>6+t$ when $\mu>0,$ and $N>6+s$ when $\mu=0,$ then \eqref{1.1} has infinitely many sign-changing solutions.
\end{thm}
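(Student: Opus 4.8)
The plan is to deduce Theorem~\ref{thm1.1} from the abstract critical point theorem of Schechter and Zou \cite{sz}. Weak solutions of \eqref{1.1} are exactly the critical points of the even $C^{1}$ functional $I$ of \eqref{1.2} on $H:=H^{1}_{0}(\Omega)$. Let $\{\phi_{j}\}$ be an $L^{2}$-orthonormal basis of eigenfunctions of $-\Delta-a(x)$ on $\Omega$ with eigenvalues $\lambda_{1}\le\lambda_{2}\le\cdots\to+\infty$, and write $H=H^{-}\oplus H^{+}$ with $H^{-}:=\mathrm{span}\{\phi_{j}:\lambda_{j}\le 0\}$ finite dimensional. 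I would first check that $I$ fulfils the structural hypotheses of \cite{sz}: it is even with $I(0)=0$, it has the required linking geometry (positivity and boundedness below on small spheres of $H^{+}$, the quadratic part being positive definite there while the two critical terms are of higher order) and $I\to-\infty$ on every finite-dimensional subspace of $H$. The abstract theorem then attaches to each $n\in\N$ a minimax value $c_{n}$, with $c_{n}\to+\infty$, such that $c_{n}$ is a \emph{sign-changing} critical value of $I$ as soon as a compactness property holds along the associated minimax sequences, and such that the corresponding critical point $u_{n}$ has augmented Morse index $m^{*}(u_{n})\to\infty$ as $n\to\infty$.

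The core of the argument is a local Palais--Smale lemma: any sequence $(u_{m})\subset H$ with $I(u_{m})\to c$, $I'(u_{m})\to 0$ and $m^{*}(u_{m})\le\ell$ is relatively compact provided $c$ lies strictly below an explicit threshold $c^{*}(\ell)$. This I would prove by Struwe's global compactness \cite{s}: along a subsequence $u_{m}=u_{0}+\sum_{i=1}^{p}w^{i}_{m}+o(1)$, where $u_{0}$ solves \eqref{1.1} and each $w^{i}_{m}$ is a concentrating, rescaled copy of an extremal for one of the two limiting problems on $\R^{N}$ --- $-\Delta w=\mu|w|^{2^{*}(t)-2}w/|y|^{t}$ (which occurs only if $\mu>0$) or $-\Delta w=|w|^{2^{*}(s)-2}w/|y|^{s}$. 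Each such bubble carries a definite amount of energy, fixed by the sharp constants of those limiting problems, and contributes at least one unit to the augmented Morse index; hence, under $c<c^{*}(\ell)$ and $m^{*}\le\ell$ no bubble can appear and $u_{m}\to u_{0}$ strongly. Choosing $c^{*}(\ell)$ to match the indexing of the $c_{n}$ is the bookkeeping that makes the scheme close, and the dichotomy $\mu>0$ versus $\mu=0$ enters here: when $\mu>0$ the $|y|^{-t}$-bubbles sit at a strictly lower energy level and therefore govern the first loss of compactness, whereas for $\mu=0$ that term is absent and the $|y|^{-s}$-bubbles take over.

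It then remains to estimate the minimax values from above. Fixing a small concentration parameter $\varepsilon>0$, I would place a truncated extremal $U_{\varepsilon}$ of the sharper of the two weighted inequalities near the point $(0,z^{*})$ --- in the interior if $(0,z^{*})\in\Omega$, and using the hypothesis $(x-(0,z^{*}))\cdot\nu\le 0$ near $(0,z^{*})$ (which is vacuous in the interior case) to control the truncation error if $(0,z^{*})\in\partial\Omega$ --- and test $I$ along functions of the form $\sum_{j<n}\eta_{j}\phi_{j}\pm U_{\varepsilon}$. Expanding, the leading part reproduces the $n$ bubble energies plus the quadratic contribution of the $\phi_{j}$-part, while the decisive correction is $-\tfrac12\int_{\Omega}a(x)U_{\varepsilon}^{2}\,dx$, negative and of order $\varepsilon^{2}$ (with a logarithmic factor in a borderline dimension), the errors coming from truncation and from the lower-order critical term being of strictly higher order in $\varepsilon$ exactly when $N>6+t$ for $\mu>0$ and $N>6+s$ for $\mu=0$. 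This yields $c_{n}<c^{*}(m_{n})$, so by the local Palais--Smale lemma each $c_{n}$ is a genuine sign-changing critical value of $I$, realised by a critical point $u_{n}$ with $m^{*}(u_{n})\to\infty$; since any fixed solution has finite Morse index, the family $\{u_{n}\}$ consists of infinitely many distinct sign-changing solutions.

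The step I expect to be hardest is the compactness lemma: in the presence of two competing critical terms one must pin down exactly the first energy level at which the weak Palais--Smale condition fails --- which is where the alternative $\mu>0$ versus $\mu=0$ becomes unavoidable --- and verify that each bubble in Struwe's decomposition genuinely raises $m^{*}$ by at least one. The weighted interaction integrals this requires, together with the sharp $\varepsilon$-exponent in the energy expansion of the truncated extremal, are precisely what force the dimension thresholds $N>6+t$ and $N>6+s$ in the statement.
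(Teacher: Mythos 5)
Your route is genuinely different from the paper's, and it contains gaps that are not just bookkeeping. The paper does \emph{not} apply the Schechter--Zou theorem to the critical functional $I$: it explicitly cannot, because $I$ fails the Palais--Smale condition at large levels. Instead it perturbs the exponents to $2^{*}(t)-\epsilon_n$, $2^{*}(s)-\epsilon_n$, applies Schechter--Zou to the compact functionals $I_{\epsilon_n}$ (after verifying the invariant-cone property of the gradient operator $\mathcal{A}_n$ on the neighborhoods $\pm D(\rho_0)$ of the cone of nonnegative functions, anticoercivity on the $E_\ell$, and a uniform energy bound $c_{n,l}\le T_1\lambda_{l+1}^{\frac{2^*(s)-\epsilon_0}{2(2^*(s)-\epsilon_0-2)}}$ independent of $n$), and then invokes a uniform compactness theorem (Theorem \ref{thm1.2}, proved in \cite{wyj}) stating that \emph{any} bounded family of solutions of the perturbed problems converges strongly as $\epsilon_n\to0$ --- with no Morse index hypothesis. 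The Morse index lower bound $m^{*}(u_{n,l})\ge l$ is used only at the very end, to show that the limiting energies $c_l$ cannot stay bounded and hence that infinitely many of the limits are distinct.

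The concrete problems with your scheme are these. First, the abstract theorem's sign-changing minimax construction (descending flow invariance of the cones) needs compactness \emph{before} it produces a critical point carrying a Morse index; the index bound is attached to the critical point the theorem outputs, not a priori to the Palais--Smale sequences, so a ``local PS lemma for sequences with $m^{*}\le\ell$'' cannot be fed back into the theorem as stated. This circularity is exactly why all existing proofs (Schechter--Zou for Brezis--Nirenberg, Devillanova--Solimini, Cao--Yan, and this paper) go through the subcritical approximation. Second, the two claims on which your compactness lemma rests --- that each bubble in the Struwe decomposition raises the augmented Morse index by at least one, and that the thresholds $c^{*}(\ell)$ can be matched against the growth of the minimax values $c_n$ --- are precisely the hard analytic content and are asserted rather than proved; for two competing Hardy--Sobolev--Maz'ya terms there is no off-the-shelf result to cite. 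Third, the dimension restrictions $N>6+t$ and $N>6+s$ do not arise from the $-\frac12\int_\Omega a(x)U_{\varepsilon}^{2}$ term in a single-bubble energy expansion (that computation would only require a weaker condition of the type $N>4+t$); in the paper they enter through the blow-up and Pohozaev-type analysis behind the uniform compactness of the perturbed solutions, together with the boundary condition $(x-(0,z^{*}))\cdot\nu\le0$. Finally, you do not address the case where the quadratic form $\int_\Omega(|\nabla u|^{2}-a(x)u^{2})$ fails to be positive definite: the cone-invariance estimate requires $\lambda_1>1$ for the weighted eigenvalue problem \eqref{2.1}, and the paper disposes of $\lambda_1\le1$ separately (Lemma \ref{lem2.1}) by observing that then every nontrivial solution changes sign and quoting the existence of infinitely many solutions from \cite{wyj}.
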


\begin{rem}\label{rem1.3}
Our result includes the special case that when $k=N$ and $t=0$ in \eqref{1.1}
which studied by
Bhakta in \cite{mb}. He assumed that $\pa\Omega\in C^{3}$ and
if $0\in \partial\Omega,$ all the principle curvature of $\partial\Omega$
at $0$ are negative. However, in this case, our condition is much weaker than their's.
For the details, one can refer to \cite{wy}.
\end{rem}

Since
$2^{*}(t)$ and $2^{*}(s)$ are the double critical exponents for the Sobolev embedding from
$H^{1}(\Omega)$ to $L_{t}^{q}(\Omega)$ and $L_{s}^{q}(\Omega)$(defined later),
the functional corresponding to \eqref{1.1}
does not satisfy the Palais-Smale condition at large energy level.
Hence it is impossible
to apply the abstract theorem by Schechter and Zou (see\cite{sz}) directly to get the existence of infinitely
many sign-changing solutions for \eqref{1.1}. To overcome this difficulty,
 we intend to use the arguments similar to \cite{cpy,cs1,ds,pw,ww,yy}, i.e., we
 first look at the following perturbed problem:
\begin{equation}\label{1.3}
\left\{%
\begin{array}{ll}
    -\Delta u=\frac{\mu|u|^{2^{*}(t)-2-\epsilon_n}u}{|y|^{t}}
    +\frac{|u|^{2^{*}(s)-2-\epsilon_n}u}{|y|^{s}}+a(x) u, & \hbox{$\text{in}~ \Omega$},\vspace{0.1cm} \\
   u=0,\,\, &\hbox{$\text{on}~\partial \Omega$}, \\
\end{array}%
\right.
\end{equation}
where $\epsilon_n>0$ is small.

Similar to the arguments used in \cite{mb,g,sz}, using an abstract theorem by Schechter and Zou (see\cite{sz}),
we will prove that for each $\ep_n$, \eqref{1.3} has a sequence of sign-changing solutions $\{u_{n,l}\}_{l=1}^\infty$
and the Morse index of $\{u_{n,l}\}$ has a lower bound. Then we can verify that $\{u_{n,l}\}$ converges to a sign-changing
solution of \eqref{1.1}.
However, we need to mention that, due to the appearance of double Hardy-Sobolev-maz'ya terms, the
studying of our problem \eqref{1.1} is greatly different from \cite{mb,g,sz} and some complicated
calculations will be needed for \eqref{1.1}.

Finally, as mentioned before, due to the appearance of the critical terms, the problem \eqref{1.1} exhibits
nonexistence phenomenon.

\begin{thm}\label{thm1.1.1}
Suppose that $N\geq3$, $a(x)\in C^1(\bar{\Om})$ with $a(x)+\frac{1}{2} x\cdot\nabla a\leq0$ for every $x\in\Om$ and
$\Omega$ is star shaped with respect to the origin.
Then \eqref{1.1} does not have any nontrivial solution.
\end{thm}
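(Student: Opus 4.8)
The plan is to use a Pohozaev-type identity. This is the standard route for nonexistence on star-shaped domains going back to Pohozaev and Brezis--Nirenberg, and it adapts cleanly to the Hardy--Sobolev--Maz'ya setting because the critical exponents $2^*(t)$ and $2^*(s)$ are precisely the ones that make the scaling work. First I would take a (sufficiently regular) nontrivial solution $u$ of \eqref{1.1}, multiply the equation by $x\cdot\nabla u$, and integrate over $\Omega$. On the left, integrating $-\Delta u\,(x\cdot\nabla u)$ by parts produces the usual bulk term $\frac{N-2}{2}\int_\Omega|\nabla u|^2\,dx$ together with a boundary term; since $u=0$ on $\partial\Omega$ one has $\nabla u=\frac{\partial u}{\partial\nu}\,\nu$ there, and the boundary contribution is $-\frac12\int_{\partial\Omega}\big|\frac{\partial u}{\partial\nu}\big|^2\,(x\cdot\nu)\,d\sigma$.

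Next I would handle the right-hand side term by term. For a pure power weighted by $|y|^{-\tau}$ with exponent $p$, the identity $\int_\Omega \frac{|u|^{p-2}u}{|y|^\tau}\,(x\cdot\nabla u)\,dx = -\frac{N-\tau}{p}\int_\Omega\frac{|u|^p}{|y|^\tau}\,dx$ holds (using $x\cdot\nabla(|y|^{-\tau})=-\tau|y|^{-\tau}$ and $\diverg x=N$, with no boundary term since $u=0$ on $\partial\Omega$); the choices $p=2^*(t),\tau=t$ and $p=2^*(s),\tau=s$ are exactly such that $\frac{N-\tau}{p}=\frac{N-2}{2}$, so these two terms contribute $\frac{N-2}{2}$ times the corresponding integrals and therefore \emph{cancel} against the same integrals that appear when one also multiplies the equation by $u$ and integrates (the energy identity $\int|\nabla u|^2 = \mu\int\frac{|u|^{2^*(t)}}{|y|^t}+\int\frac{|u|^{2^*(s)}}{|y|^s}+\int a u^2$). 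For the linear term, $\int_\Omega a(x)u\,(x\cdot\nabla u)\,dx = \frac12\int_\Omega a(x)\,x\cdot\nabla(u^2)\,dx = -\frac12\int_\Omega\big(N a(x)+x\cdot\nabla a(x)\big)u^2\,dx$ after integration by parts (again no boundary term). Combining the Pohozaev identity with the energy identity, all the critical nonlinear terms drop out and one is left with
\begin{equation*}
\frac12\int_{\partial\Omega}\Big|\frac{\partial u}{\partial\nu}\Big|^2(x\cdot\nu)\,d\sigma = \int_\Omega\Big(a(x)+\tfrac12\,x\cdot\nabla a(x)\Big)u^2\,dx.
\end{equation*}
By the star-shapedness of $\Omega$ with respect to the origin, $x\cdot\nu\ge 0$ on $\partial\Omega$, so the left side is nonnegative; by the hypothesis $a(x)+\frac12 x\cdot\nabla a\le 0$ on $\Omega$, the right side is nonpositive. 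Hence both sides vanish, which forces $\int_\Omega a\,u^2 = 0$ (using $a>0$, or directly $a+\frac12 x\cdot\nabla a$ together with the sign), so $u\equiv 0$ on $\Omega$, contradicting nontriviality; more cleanly, $\frac{\partial u}{\partial\nu}\equiv 0$ on the part of $\partial\Omega$ where $x\cdot\nu>0$, and combined with $u=0$ on $\partial\Omega$ a unique continuation argument gives $u\equiv0$.

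The main obstacle is \emph{justifying the integrations by parts}, i.e. the regularity and integrability needed for the Pohozaev identity when the nonlinearities are singular at $\{y=0\}$ and $(0,z^*)\in\bar\Omega$. Weak $H^1_0$ solutions need to be shown regular enough (e.g. in $C^2$ or $W^{2,2}_{loc}$ away from $\{y=0\}$ and with enough decay of $\nabla u$ near $\{y=0\}$) for the boundary and weighted volume integrals to make sense; the clean way is to first derive the identity on $\Omega\setminus\{|y|<\delta\}$ (or on $\Omega$ with the origin-line excised), control the error terms on $\{|y|=\delta\}$ using the Hardy--Sobolev--Maz'ya inequality \eqref{1.1.1} and standard elliptic estimates, and let $\delta\to0$. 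This is the step that requires the ``sufficiently regular'' qualifier and the bulk of the care; the algebraic cancellation itself is forced by the critical exponents and is routine.
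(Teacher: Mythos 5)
Your proposal is correct and takes essentially the same route as the paper: a Pohozaev identity combined with the energy identity, leading to the same relation $\frac12\int_{\partial\Omega}\big(\frac{\partial u}{\partial\nu}\big)^2(x\cdot\nu)\,dS=\int_{\Omega}\big(a+\frac12 x\cdot\nabla a\big)u^2\,dx$, the same sign argument from star-shapedness and the hypothesis on $a$, and the same conclusion via unique continuation. The regularization you flag as the main technical point is handled in the paper exactly in the spirit you describe, via smooth cutoffs $\varphi_{\epsilon,R}$ vanishing near the singularity, following the computation in \cite{bs}.
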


Throughout this paper, we denote the norm of  $H^{1}_{0}(\Omega)$ by
$\|u\|=\bigl(\int_{\Omega}|\nabla u|^{2}dx\bigl)^{\frac{1}{2}}$;
the norm of $L_{t}^{q}(\Omega)(1\leq q<\infty,0\leq t<2)$ by
$|u|_{q,t,\Omega}=\bigl(\int_{\Omega}\frac{|u|^{q}}{|y|^{t}}dx\bigl)^{\frac{1}{q}}$
and positive constants (possibly different) by $C$.

The organization of the paper is as follows.
 In Section
  \ref{s2}, we prove the existence and the estimate of Morse index of sign-changing solution of
  \eqref{1.3}.
 Using this,
   we will  prove our main results in Section \ref{s3}.

\section{Existence of sign-changing critical points}\label{s2}

In this section, we will prove the existence of sign-changing solutions for the
perturbed compact problem \eqref{1.3} with an estimate on Morse index.
For this purpose,
we mainly use an abstract theorem of Schechter and Zou (see \cite{sz}, Theorem 2).
However, we can not directly apply it due to the presence of the Hardy-Sobolev-Maz'ya terms in
\eqref{1.3}, so we need some precise estimates.

Firstly, we consider the weighted eigenvalue problem:
\begin{equation}\label{2.1}
\left\{%
\begin{array}{ll}
    -\Delta u=\lambda a(x)u, & \hbox{$\text{in}~ \Omega$},\vspace{0.1cm} \\
   u=0,\,\, &\hbox{$\text{on}~\partial \Omega$}. \\
\end{array}%
\right.
\end{equation}
Since $a(x)$ belongs to $ C^{1}(\bar{\Omega})$ and is strictly positive, \eqref{2.1} has infinitely many eigenvalues
$\{\lambda_i\}_{i=1}^\infty$ such that $0<\lambda_1<\lambda_2\leq\lambda_3\leq\cdots\leq\lambda_\ell\leq\cdots$.
Moreover, we can write
\begin{equation}\label{2.2}
\lambda_1=\inf_{u\in H^1_0(\Om)\backslash\{0\}}\frac{\int_{\Omega}|\nabla u|^{2}}{\int_{\Omega}a(x)u^2}.
\end{equation}
Let $\varphi_i$ be the eigenfunction corresponding to $\lambda_i$ and denote
$E_\ell:=span\{\varphi_1,\varphi_1\cdots,\varphi_\ell\}$.
Then $\varphi_1>0$, $E_\ell\subset E_{\ell+1}$ and $H_0^1(\Om)=\overline{\cup_{\ell=1}^\infty E_\ell}$ (see \cite{mw}).

\begin{lem}\label{lem2.1}
Suppose that all the assumptions in Theorem \ref{thm1.1} hold and $\lambda_1\leq1$. Then \eqref{1.1} has infinitely many
sign-changing solutions.
\end{lem}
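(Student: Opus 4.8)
The plan is to reduce the statement for \eqref{1.1} to the corresponding statement for the perturbed problem \eqref{1.3}, via a limiting argument in $\epsilon_n \to 0$, with Morse index control providing the key compactness. First I would set up the variational framework for \eqref{1.3}: the associated functional
\[
I_n(u)=\frac12\int_\Omega\bigl(|\nabla u|^2-a(x)u^2\bigr)\,dx-\frac{\mu}{2^{*}(t)-\epsilon_n}\int_\Omega\frac{|u|^{2^{*}(t)-\epsilon_n}}{|y|^{t}}\,dx-\frac{1}{2^{*}(s)-\epsilon_n}\int_\Omega\frac{|u|^{2^{*}(s)-\epsilon_n}}{|y|^{s}}\,dx
\]
is now subcritical (the exponents $2^{*}(t)-\epsilon_n$ and $2^{*}(s)-\epsilon_n$ are strictly below the Hardy–Sobolev critical thresholds for the embeddings $H^1_0(\Omega)\hookrightarrow L^q_t(\Omega)$, $L^q_s(\Omega)$), so $I_n$ satisfies the Palais–Smale condition at every level. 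Since $\lambda_1\le 1$, the quadratic part $\frac12\int_\Omega(|\nabla u|^2-a(x)u^2)$ is no longer positive definite; I would handle this by working on the eigenspace decomposition $H^1_0(\Omega)=\overline{\cup_\ell E_\ell}$ introduced above, treating the (finite-dimensional) negative/zero directions separately so that the linking/minimax geometry required by the Schechter–Zou abstract theorem (\cite{sz}, Theorem 2) is still available.

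Next I would invoke the abstract theorem of Schechter–Zou in the form used in \cite{mb,g,sz}: with the invariant cones of positive and negative functions playing the role of the "positive/negative cone" in that theorem, one obtains for each fixed $n$ a sequence $\{u_{n,l}\}_{l=1}^\infty$ of sign-changing critical points of $I_n$ with $I_n(u_{n,l})\to\infty$ as $l\to\infty$, together with the crucial Morse-index lower bound: the augmented Morse index of $u_{n,l}$ (Morse index plus nullity) is at least $l$, or more precisely bounded below by something tending to infinity with $l$. Verifying the hypotheses of that abstract theorem — the $(PS)$ condition, the cone invariance of a suitable pseudo-gradient flow, and the minimax geometry over the spaces $E_\ell$ — is where the bulk of the work lies, and where the double Hardy–Sobolev–Maz'ya terms force the "complicated calculations" the authors warn about: one must control, uniformly in $n$, the interaction of the two weighted nonlinearities with the eigenspaces.

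Then comes the passage to the limit $\epsilon_n\to 0$. Fix $l$ and consider $\{u_{n,l}\}_n$. Standard test-function arguments give a uniform-in-$n$ bound $\|u_{n,l}\|\le C_l$, so up to a subsequence $u_{n,l}\rightharpoonup u_l$ in $H^1_0(\Omega)$, and $u_l$ is a (possibly trivial) solution of \eqref{1.1}. The danger is twofold: $u_l$ could be zero (blow-up / loss of mass to a bubble), or $u_l$ could fail to change sign (a bubble forming on, say, the negative part). This is the main obstacle. The dimension restriction $N>6+t$ (resp. $N>6+s$ when $\mu=0$) and the geometric condition $(x-(0,z^*))\cdot\nu\le 0$ near $(0,z^*)$ enter exactly here: the energy of a single Hardy–Sobolev–Maz'ya bubble is a fixed positive number, while the Morse index of a function that is close (after rescaling) to a sum of $m$ such bubbles plus a regular part is $O(m)$ (each bubble contributes a bounded number of negative directions, by a Sobolev-type nondegeneracy estimate that needs $N$ large); hence the index lower bound $\ge l$ forces, in the limit, a lower bound on the energy $I(u_l)$ that grows with $l$, which in turn forbids $u_l\equiv 0$ for $l$ large. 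For the sign-change, one shows that $u_l^\pm$ each carry energy bounded below (again using the index bound restricted to $E_{u_l^+}$ and $E_{u_l^-}$ and the geometric condition to rule out boundary concentration), so neither part vanishes in the limit, and $u_l$ is genuinely sign-changing. Carrying out the bubble-energy vs. Morse-index bookkeeping, with the two critical weights $|y|^{-t}$ and $|y|^{-s}$ simultaneously present, is the technical heart; the geometric hypothesis on $\partial\Omega$ is used to exclude concentration at $(0,z^*)$ when it lies on the boundary via a Pohozaev-type identity. Finally, since for each $l$ we produce a sign-changing solution $u_l$ of \eqref{1.1} with $I(u_l)\to\infty$, these solutions are mutually distinct, giving infinitely many of them and completing the proof.
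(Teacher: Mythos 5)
Your proposal misses the point of this lemma and, as written, contains a genuine gap. Lemma \ref{lem2.1} exists precisely to dispose of the case $\lambda_1\le 1$, because the Schechter--Zou machinery you propose to run is unavailable there: the cone-invariance property (Proposition \ref{prop2.2} in the paper) is proved via the contraction estimate $\|B(u)\|\le \lambda_1^{-1}\|u\|$ together with the choice of $\gamma\in(\lambda_1^{-1},1)$, which requires $\lambda_1>1$. When $\lambda_1\le 1$ this gives no contraction, so one cannot conclude $\mathrm{dist}(\mathcal{A}_n(u),\mathcal{P})\le\gamma\,\mathrm{dist}(u,\mathcal{P})$, and without invariance of the neighborhoods $\pm D(\rho_0)$ the abstract theorem of \cite{sz} does not produce sign-changing critical points. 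Your remark that you would ``treat the finite-dimensional negative/zero directions separately'' addresses the minimax geometry but not the cone invariance, which is the actual obstruction; you never explain how the hypothesis $\lambda_1\le 1$ is used, whereas it must enter somewhere since it is the defining assumption of the lemma.

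The paper's proof is two lines and entirely different. Testing \eqref{1.1} against the positive first eigenfunction $\varphi_1$ gives
$(\lambda_1-1)\int_{\Omega}a(x)u\varphi_1=\mu\int_{\Omega}|y|^{-t}|u|^{2^{*}(t)-2}u\varphi_1+\int_{\Omega}|y|^{-s}|u|^{2^{*}(s)-2}u\varphi_1$;
if $u$ is a nontrivial solution of fixed sign, say $u\ge 0$, the right-hand side is strictly positive while the left-hand side is $\le 0$ when $\lambda_1\le 1$, a contradiction. Hence every nontrivial solution changes sign, and the conclusion follows by quoting the companion result (Theorem 1.1 of \cite{wyj}) that \eqref{1.1} has infinitely many nontrivial solutions. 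So the correct strategy here is not to build sign-changing solutions by hand but to observe that in this parameter regime sign-change is automatic for any nontrivial solution.
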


\begin{proof}
Multiplying $\varphi_1$ to the equation \eqref{1.1} and integrating by parts, we find
$$(\lambda_1-1)\ds\int_{\Omega}a(x)u\varphi_1
=\mu\int_{\Omega}\frac{|u|^{2^{*}(t)-2}u\varphi_1}{|y|^{t}}
+\int_{\Omega}\frac{|u|^{2^{*}(s)-2}u\varphi_1}{|y|^{s}}.$$
So it is easy to see that if $\lambda_1\leq1$, then any nontrivial solution of \eqref{1.1} has to change sign.
Since it follows from Theorem 1.1 in \cite{wyj} that \eqref{1.1} has infinitely many solutions, we have proved our result.
\end{proof}

Using Lemma \ref{lem2.1}, to prove Theorem \ref{thm1.1}, next we just need to discuss $\lambda_1>1$.
Fix $\epsilon_0>0$ small enough and choose a sequence $\epsilon_n\in(0,\epsilon_0)$ such that
$\epsilon_n\downarrow0$ in \eqref{1.3}.

Obviously,
the energy functional corresponding to \eqref{1.3} is
\begin{equation}\label{1.4}
I_{\epsilon_n}(u)=\frac{1}{2}\int_{\Omega}\big(|\nabla u|^{2}-a(x)
u^{2}\big)-\frac{\mu}{2^{*}(t)-\epsilon_n}\int_{\Omega}\frac{|u|^{2^{*}(t)-\epsilon_n}}{|y|^{t}}
-\frac{1}{2^{*}(s)-\epsilon_n}\int_{\Omega}\frac{|u|^{2^{*}(s)-\epsilon_n}}{|y|^{s}}.
\end{equation}
Then $I_{\epsilon_n}(u)\in C^2$ is an even functional and satisfies the Palais Smale
 condition in all energy levels. Moreover, it follows from the symmetric
 mountain pass lemma \cite{ar,r}, \eqref{1.3} has infinitely many solutions.
  More precisely, there are positive numbers $c_{n,l},l=1,2,\cdot\cdot\cdot,$ with $c_{n,l}\rightarrow\infty$
 as $l\rightarrow +\infty$,
 and a solution $u_{n,l}$ for \eqref{1.3} satisfying
 $
I_{\epsilon_n}(u_{n,l})=c_{n,l}.
 $

Recall that the augmented Morse index $m^*(u_{n,l})$ of  $u_{n,l}$ is defined  as
$$
m^*(u_{n,l}):=\max\{dim H: H\subset H_0^1(\Om)\, \text{is a subspace such that}\,
\left \langle I''_{\epsilon_n}(v),v\right\rangle\leq 0,\,\forall v\in H\}.
$$
For each $\epsilon_n\in(0,\epsilon_0)$ fixed, we define
$$
\|u\|_{n,*}:=\mu\Big(\int_{\Omega}\frac{|u|^{2^{*}(t)-\epsilon_n}}{|y|^{t}}\Big)^{\frac{1}{2^*(t)-\epsilon_n}}
+\Big(\int_{\Omega}\frac{|u|^{2^{*}(s)-\epsilon_n}}{|y|^{s}}\Big)^{\frac{1}{2^*(s)-\epsilon_n}},\,\,\forall u\in H_0^1(\Om).
$$
Then from the Hardy-Sobolev-Maz'ya inequality \eqref{1.1.1}, we get
$\|u\|_{n,*}\leq C\|u\|$ for all $u\in H_0^1(\Om)$ for some constant $C>0$ independent of $n$.
Moreover, for fixed $n$, we have $\|u_k-u\|_{n,*}\rightarrow 0$ whenever $u_k\rightharpoonup u$ weakly in $H_0^1(\Om)$.

We write $\mathcal{P}:=\{u\in H_0^1(\Om):\,u\geq 0\}$ for the convex cone of nonnegative functions in $H_0^1(\Om)$
and define for $\rho>0$,
$$
D(\rho):=\{u\in H_0^1(\Om):\,dist(u,\mathcal{P})<\rho\}.
$$
Denote the set of all critical points by
$$
K_n:=\{u\in H_0^1(\Om):\,I'_{\epsilon_n}(u)=0\}.
$$

Note that the gradient $I'_{\epsilon_n}$ is of form $I'_{\epsilon_n}(u)=u-\mathcal{A}_n(u)$,
where $\mathcal{A}_n:\,H_0^1(\Om)\rightarrow H_0^1(\Om)$ is a continuous operator. Now we will
study how the operator $\mathcal{A}_n$ behaves on $D(\rho)$.

\begin{prop}\label{prop2.2}
Let $\lambda_1>1$. Then for any $\rho_{0}>0$ small enough, $\mathcal{A}_n(\pm D(\rho_0))
\subset \pm D(\rho)\subset \pm D(\rho_0)$ for some $\rho\in(0,\rho_0)$. Moreover,
$\pm D(\rho_0)\cap K_n\subset \mathcal{P}$.
\end{prop}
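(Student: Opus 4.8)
I would prove this along the lines of \cite{sz,mb,g}: the statement says exactly that the gradient map $\mathcal{A}_n=\mathrm{id}-I'_{\epsilon_n}$ pushes a thin one-sided neighbourhood of the cone $\mathcal{P}$ strictly into itself, and likewise for $-\mathcal{P}$. The mechanism is that $\lambda_1>1$ makes the \emph{linear} part of $\mathcal{A}_n$ a strict contraction near $\mathcal{P}$, while the Hardy--Sobolev--Maz'ya nonlinearities contribute only higher-order corrections in $\mathrm{dist}(\cdot,\mathcal{P})$; this is precisely why $\lambda_1\le1$ was disposed of beforehand in Lemma~\ref{lem2.1}. As preliminaries I would record two uniform facts: recalling that $\epsilon_0$ is small, we may assume $\epsilon_0<\frac{2(2-s)}{N-2}$, so that $2^{*}(t)-2-\epsilon_n$ and $2^{*}(s)-2-\epsilon_n$ stay positive and bounded below by a fixed positive number uniformly in $n$; and since $\Om$ is bounded and $s<2\le k$, the weights $|y|^{-t}$ and $|y|^{-s}$ are integrable on $\Om$, so interpolating the Hardy--Sobolev--Maz'ya inequality \eqref{1.1.1} (and its analogue with $t$ in place of $s$) against these finite measures gives $H_0^1(\Om)\hookrightarrow L^{2^{*}(t)-\epsilon_n}_t(\Om)$ and $H_0^1(\Om)\hookrightarrow L^{2^{*}(s)-\epsilon_n}_s(\Om)$ with embedding constants independent of $n$. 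I will also use $\int_{\Om}a(x)\varphi^2\le\lambda_1^{-1}\|\varphi\|^2$ for all $\varphi\in H_0^1(\Om)$, which is \eqref{2.2}.

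The core step is a contraction estimate. Given $u$, set $w:=\mathcal{A}_n(u)$, so $w$ solves $-\Delta w=\mu\frac{|u|^{2^{*}(t)-2-\epsilon_n}u}{|y|^{t}}+\frac{|u|^{2^{*}(s)-2-\epsilon_n}u}{|y|^{s}}+a(x)u$ weakly. Testing with $w^-=\max\{-w,0\}$ and using $\nabla w^{+}\cdot\nabla w^{-}=0$ a.e.,
\[
-\|w^-\|^2=\mu\int_{\Om}\frac{|u|^{2^{*}(t)-2-\epsilon_n}u}{|y|^{t}}w^-+\int_{\Om}\frac{|u|^{2^{*}(s)-2-\epsilon_n}u}{|y|^{s}}w^-+\int_{\Om}a(x)uw^-.
\]
Writing $u=u^{+}-u^{-}$ and $|u|^{q-2}u=(u^{+})^{q-1}-(u^{-})^{q-1}$, the terms coming from $u^{+}$ are nonpositive once $\|w^-\|^2$ is isolated on the left, so they may be dropped, leaving
\[
\|w^-\|^2\le\mu\int_{\Om}\frac{(u^-)^{2^{*}(t)-1-\epsilon_n}}{|y|^{t}}w^-+\int_{\Om}\frac{(u^-)^{2^{*}(s)-1-\epsilon_n}}{|y|^{s}}w^-+\int_{\Om}a(x)u^-w^-.
\]
I would bound the two nonlinear integrals by H\"older (exponents $\frac{p}{p-1},p$ with $p=2^{*}(t)-\epsilon_n$, resp.\ $p=2^{*}(s)-\epsilon_n$) and the uniform embeddings above, obtaining $C\bigl(\mu\|u^-\|^{2^{*}(t)-1-\epsilon_n}+\|u^-\|^{2^{*}(s)-1-\epsilon_n}\bigr)\|w^-\|$ with $C$ independent of $n$; and the linear integral by Cauchy--Schwarz for the form $(\varphi,\psi)\mapsto\int_{\Om}a(x)\varphi\psi$ together with \eqref{2.2}, giving $\int_{\Om}a(x)u^-w^-\le\lambda_1^{-1}\|u^-\|\|w^-\|$. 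Dividing by $\|w^-\|$ (the case $w^-=0$ being trivial),
\[
\|w^-\|\le\Bigl(\tfrac{1}{\lambda_1}+C\mu\|u^-\|^{2^{*}(t)-2-\epsilon_n}+C\|u^-\|^{2^{*}(s)-2-\epsilon_n}\Bigr)\|u^-\|.
\]
Since $\lambda_1>1$ and the two exponents are bounded below by a fixed positive number, once $\rho_0$ is small enough the coefficient on the right does not exceed some $\theta=\theta(\rho_0)\in(\lambda_1^{-1},1)$ as long as $\|u^-\|\le\rho_0$, all uniformly in $n$. Combining $\mathrm{dist}(w,\mathcal{P})\le\|w^-\|$ with the same estimate applied to $\mathcal{A}_n(u)-\mathcal{A}_n(v)$, where $v:=P_{\mathcal{P}}(u)\in\mathcal{P}$ is the point of $\mathcal{P}$ nearest $u$ (so $\mathcal{A}_n(v)\in\mathcal{P}$ by the maximum principle and hence $\mathrm{dist}(\mathcal{A}_n(u),\mathcal{P})\le\|\mathcal{A}_n(u)-\mathcal{A}_n(v)\|$, whose right-hand side is controlled the same way by $\|u-v\|=\mathrm{dist}(u,\mathcal{P})$), I arrive at
\[
\mathrm{dist}(\mathcal{A}_n(u),\mathcal{P})\le\theta\,\mathrm{dist}(u,\mathcal{P})\qquad\text{for all }u\in D(\rho_0).
\]
Thus $\mathcal{A}_n(D(\rho_0))\subset D(\theta\rho_0)$, and with $\rho:=\theta\rho_0\in(0,\rho_0)$ we get $\mathcal{A}_n(D(\rho_0))\subset D(\rho)\subset D(\rho_0)$.

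For the remaining assertions: $I_{\epsilon_n}$ is even, hence $\mathcal{A}_n$ is odd, so $\mathcal{A}_n(-D(\rho_0))=-\mathcal{A}_n(D(\rho_0))\subset-D(\rho)\subset-D(\rho_0)$. And if $u\in D(\rho_0)\cap K_n$, then $u=\mathcal{A}_n(u)$, so $\mathrm{dist}(u,\mathcal{P})=\mathrm{dist}(\mathcal{A}_n(u),\mathcal{P})\le\theta\,\mathrm{dist}(u,\mathcal{P})$ with $\theta<1$, which forces $\mathrm{dist}(u,\mathcal{P})=0$ and thus $u\in\mathcal{P}$ since $\mathcal{P}$ is closed; the inclusion $-D(\rho_0)\cap K_n\subset-\mathcal{P}$ follows in the same way from oddness.

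I expect the only real difficulty to be quantitative: making every constant uniform in $n$ — which is where the choice of $\epsilon_0$ and the integrability of $|y|^{-t},|y|^{-s}$ (guaranteed by $s<2\le k$) are used — and carrying the two weighted critical-growth terms along simultaneously so that, after H\"older and Hardy--Sobolev--Maz'ya, they are genuinely $o(1)$ relative to the linear term $\lambda_1^{-1}\mathrm{dist}(u,\mathcal{P})$ as $\rho_0\to0$. The linear term is harmless only because $\lambda_1>1$.
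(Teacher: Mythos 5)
Your overall strategy --- a contraction estimate for $\mathcal{A}_n$ on $D(\rho_0)$ in which the linear part contributes the factor $\lambda_1^{-1}<1$ and the two weighted critical terms contribute superlinear corrections --- is the same as the paper's, and your preliminaries (uniformity in $n$, integrability of the weights since $s<2\le k$) are fine. But the way you close the estimate has a genuine gap, in two places. First, your core inequality bounds $\|w^-\|$ by $\bigl(\lambda_1^{-1}+o(1)\bigr)\|u^-\|$ with $\|u^-\|$ the $H_0^1$-norm of the negative part, and you then need $\|u^-\|\le \mathrm{dist}(u,\mathcal{P})$ to conclude. That inequality is false in general: in $H_0^1(\Om)$ the nearest point of $\mathcal{P}$ to $u$ is the solution of an obstacle problem, not $u^+$, so one only has $\mathrm{dist}(u,\mathcal{P})\le\|u-u^{+}\|=\|u^-\|$ and not the reverse. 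What is true, and what the paper uses, is the weighted Lebesgue-norm statement $|u^-|_{2^{*}(t)-\epsilon_n,t,\Om}=\min_{v\in\mathcal{P}}|u-v|_{2^{*}(t)-\epsilon_n,t,\Om}\le C\,\mathrm{dist}(u,\mathcal{P})$, because $u^{+}$ does minimize pointwise; that suffices for the two nonlinear integrals but not for the linear term $\int_\Om a(x)u^-w^-$, whose sharp constant $\lambda_1^{-1}$ forces you to measure $u^-$ in the $H_0^1$-norm. Second, your fallback --- applying ``the same estimate'' to $\mathcal{A}_n(u)-\mathcal{A}_n(v)$ with $v$ the metric projection of $u$ onto $\mathcal{P}$ --- would require $\mathcal{A}_n$ to be $\theta$-Lipschitz on $D(\rho_0)$ with $\theta<1$. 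It is not: the nonlinear part has local Lipschitz constant of order $\|u\|^{2^{*}(s)-2-\epsilon_n}+\|v\|^{2^{*}(s)-2-\epsilon_n}$, and $D(\rho_0)$ contains functions of arbitrarily large norm (every large element of $\mathcal{P}$ lies in it), so smallness of $\mathrm{dist}(u,\mathcal{P})$ gives no control over this constant.

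The paper resolves exactly this tension by splitting $\mathcal{A}_n(u)=B(u)+F_n(u)$ and treating the two pieces by different mechanisms: the linear operator $B$ is handled by the difference trick, where linearity gives $\mathrm{dist}(B(u),\mathcal{P})\le\|B(u)-B(v)\|=\|B(u-v)\|\le\lambda_1^{-1}\,\mathrm{dist}(u,\mathcal{P})$ as in \eqref{2.6}, while $F_n$ is handled by your $w^-$-test argument together with the weighted $L^{q}$ minimality of $u^{+}$, yielding the superlinear bound \eqref{2.7}; the two are then combined via the subadditivity of $\mathrm{dist}(\cdot,\mathcal{P})$ over sums, $\mathcal{P}$ being a convex cone closed under addition. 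If you reorganize your proof along this splitting, each of your individual estimates goes through essentially verbatim and the argument closes.
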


\begin{proof}
Firstly, we observe that $\mathcal{A}_n$ can be decomposed as $\mathcal{A}_n(u)=B(u)+F_n(u)$,
where $B(u), F_n(u)\in H_0^1(\Om)$ are the unique solutions of the following equations respectively
$$
-\Delta(B(u))=a(x)u,\,\,\,\,\,-\Delta(F_n(u))=\frac{\mu|u|^{2^{*}(t)-2-\epsilon_n}u}{|y|^{t}}
    +\frac{|u|^{2^{*}(s)-2-\epsilon_n}u}{|y|^{s}}.
    $$
In other words, $B(u)$ and $F_n(u)$ can be uniquely determined by
\begin{equation}\label{2.4}
\left \langle B(u),v\right\rangle_{H_0^1(\Om)}=\int_{\Omega}a(x)uv,
\end{equation}
and
\begin{equation}\label{2.5}
\left \langle F_n(u),v\right\rangle_{H_0^1(\Om)}
=\mu\int_{\Omega}\frac{|u|^{2^{*}(t)-2-\epsilon_n}uv}{|y|^{t}}
+\int_{\Omega}\frac{|u|^{2^{*}(s)-2-\epsilon_n}uv}{|y|^{s}}.
\end{equation}
Now we claim that if $u\in \mathcal{P}$, then $B(u), F_n(u)\in \mathcal{P}$. To see this,
let $u\in \mathcal{P}$. Then
$$-\int_{\Omega}|\nabla B(u)^-|^2=\left \langle B(u),B(u)^-\right\rangle_{H_0^1(\Om)}
=\int_{\Omega}a(x)uB(u)^-\geq0,$$
which yields $B(u)\in \mathcal{P}$, and similarly $F_n(u)\in\mathcal{P}$.

Not that
$$
\|B(u)\|^2=\left \langle B(u),B(u)\right\rangle_{H_0^1(\Om)}
=\int_{\Omega}a(x)uB(u)\leq \Big(\int_{\Omega}a(x)u^2\Big)^{\frac{1}{2}}
\Big(\int_{\Omega}a(x)B(u)^2\Big)^{\frac{1}{2}}.
$$
So it follows from \eqref{2.2} that
$$
\|B(u)\|^2\leq \frac{1}{\lambda_1}\|u\|\|B(u)\|,
$$
and then
$$
\|B(u)\|\leq \frac{1}{\lambda_1}\|u\|.
$$

For any $u\in H_0^1(\Om)$, we consider $v\in \mathcal{P}$ such that
$dist(u,\mathcal{P})=\|u-v\|$. Then
\begin{equation}\label{2.6}
dist(B(u),\mathcal{P})\leq \|B(u)-B(v)\|\leq\frac{1}{\lambda_1}\|u-v\|
\leq \frac{1}{\lambda_1}dist(u,\mathcal{P}).
\end{equation}
On the other hand, using the equality \eqref{1.1.1}, we find
\begin{align*}
dist(F_n(u),\mathcal{P})\|F_n(u)^-\|
&\leq \|F_n(u)-F_n(u)^+\|\|F_n(u)^-\|=\|F_n(u)^-\|^2\\
&=-\left \langle F_n(u),F_n(u)^-\right\rangle_{H_0^1(\Om)}\\
&=-\mu\int_{\Omega}\frac{|u|^{2^{*}(t)-2-\epsilon_n}uF_n(u)^-}{|y|^{t}}
-\int_{\Omega}\frac{|u|^{2^{*}(s)-2-\epsilon_n}uF_n(u)^-}{|y|^{s}}\\
&\leq \mu\int_{\Omega}\frac{|u|^{2^{*}(t)-2-\epsilon_n}u^-F_n(u)^-}{|y|^{t}}
+\int_{\Omega}\frac{|u|^{2^{*}(s)-2-\epsilon_n}u^-F_n(u)^-}{|y|^{s}}\\
&=\mu\int_{\Omega}\frac{|u^-|^{2^{*}(t)-1-\epsilon_n}F_n(u)^-}{|y|^{t}}
+\int_{\Omega}\frac{|u^-|^{2^{*}(s)-1-\epsilon_n}F_n(u)^-}{|y|^{s}}\\
&\leq \mu\Big(\int_{\Omega}\frac{|u^-|^{2^{*}(t)-\epsilon_n}}{|y|^{t}}\Big)^{\frac{2^{*}(t)-1-\epsilon_n}{2^{*}(t)-\epsilon_n}}
\Big(\int_{\Omega}\frac{|F_n(u)^-|^{2^{*}(t)-\epsilon_n}}{|y|^{t}}\Big)^{\frac{1}{2^{*}(t)-\epsilon_n}}\\
&\quad+\Big(\int_{\Omega}\frac{|u^-|^{2^{*}(s)-\epsilon_n}}{|y|^{s}}\Big)^{\frac{2^{*}(s)-1-\epsilon_n}{2^{*}(s)-\epsilon_n}}
\Big(\int_{\Omega}\frac{|F_n(u)^-|^{2^{*}(s)-\epsilon_n}}{|y|^{s}}\Big)^{\frac{1}{2^{*}(s)-\epsilon_n}}\\
&\leq C \big(|u^-|_{2^{*}(t)-\epsilon_n,t,\Om}^{2^{*}(t)-1-\epsilon_n}
+|u^-|_{2^{*}(s)-\epsilon_n,s,\Om}^{2^{*}(s)-1-\epsilon_n}\big)\|F_n(u)^-\|,
\end{align*}
which implies that for any $u\in H_0^1(\Om)$,
\begin{equation}\label{2.7}
\begin{array}{ll}
dist(F_n(u),\mathcal{P})
\leq C\big(dist(u,\mathcal{P})^{2^{*}(t)-1-\epsilon_n}
+dist(u,\mathcal{P})^{2^{*}(s)-1-\epsilon_n}\big),
\end{array}
\end{equation}
since
\begin{align*}
|u^-|_{2^{*}(t)-\epsilon_n,t,\Om}=\min\limits_{v\in\mathcal{P}}|u-v|_{2^{*}(t)-\epsilon_n,t,\Om}
\leq C\min\limits_{v\in\mathcal{P}}||u-v||=Cdist(u,\mathcal{P}).
\end{align*}

Since $\la_1>1$, we can choose $\ga\in (\frac{1}{\la_1},1)$. Then from \eqref{2.7}, there exists $\rho_0>0$ small enough such that
for any $u\in D(\rho_0)$,
\begin{equation}\label{2.8}
dist(F_n(u),\mathcal{P})
\leq \big(\ga-\frac{1}{\la_1}\big)dist(u,\mathcal{P}).
\end{equation}
Combining \eqref{2.6} and \eqref{2.8}, one has
$$dist(\mathcal{A}_n(u),\mathcal{P})\leq dist(B(u),\mathcal{P})
+dist(F_n(u),\mathcal{P})\leq
\ga dist(u,\mathcal{P}),\,\,\,\forall u\in D(\rho_0).$$

Hence we can get $\mathcal{A}_n(D(\rho_0))\subset D(\ga\rho_0)\subset D(\rho_0)$. Also if ,
$dist(u,\mathcal{P})<\rho_0$ and $I'_{\epsilon_n}(u)=0$, that is, $u=\mathcal{A}_n(u)$, then
$dist(u,\mathcal{P})=dist(\mathcal{A}_n(u),\mathcal{P})\leq \ga dist(u,\mathcal{P})$, which yields
$u\in \mathcal{P}$. Similarly, we can prove $\mathcal{A}_n(-D(\rho_0))\subset -D(\rho)\subset-D(\rho_0)$
for some $\rho\in (0,\rho_0)$ and $-D(\rho)\cap K_n\subset\mathcal{P}$. This completes our proof.
\end{proof}

\begin{prop}\label{prop2.3}
Let $\lambda_1>1$. Then for any $\ell$, $\lim\limits_{\|u\|\rightarrow\infty,u\in E_\ell}I_{\epsilon_n}(u)=-\infty$.
\end{prop}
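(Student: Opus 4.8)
The plan is to bound $I_{\epsilon_n}$ from above on $E_\ell$ by an expression of the form $C\|u\|^{2}-c\|u\|^{q}$ with $q>2$, which immediately forces the limit to be $-\infty$. The only structural feature of $E_\ell=\mathrm{span}\{\varphi_1,\dots,\varphi_\ell\}$ that I will use is that it is finite-dimensional, so that all norms on it are equivalent; in particular the running hypothesis $\lambda_1>1$ is not actually needed for this proposition.

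First I control the quadratic part. Since $a(x)>0$ one has, for every $u\in H_0^1(\Om)$,
$$
\tfrac12\int_{\Omega}\bigl(|\nabla u|^2-a(x)u^2\bigr)\le \tfrac12\int_{\Omega}|\nabla u|^2=\tfrac12\|u\|^2 .
$$
(If a sharper bound were desired one could expand $u=\sum_{i\le\ell}c_i\varphi_i$ and use $\int_{\Omega}|\nabla\varphi_i|^2=\lambda_i\int_{\Omega}a\varphi_i^2$ to get $\int_{\Omega}(|\nabla u|^2-a u^2)\le(\lambda_\ell-1)\int_{\Omega}a u^2\le C\|u\|^2$, but the crude inequality above already suffices.)

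Next I bound the critical $s$-term from below. Because $\Omega$ is bounded and $s<2\le k$, the weight $|y|^{-s}$ is integrable on $\Omega$, so by Hölder's inequality combined with \eqref{1.1.1} the quantity $|u|_{q,s,\Om}=\bigl(\int_{\Omega}|u|^q/|y|^s\bigr)^{1/q}$ is finite on $H_0^1(\Om)$ for $q=2^{*}(s)-\epsilon_n$; moreover $|u|_{q,s,\Om}>0$ whenever $u\neq0$ since $|y|^{-s}>0$ a.e.\ on $\Omega$, and $q>1$, so $|\cdot|_{q,s,\Om}$ is a genuine norm on $H_0^1(\Om)$, hence on $E_\ell$. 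By equivalence of norms on the finite-dimensional space $E_\ell$ there is a constant $c_\ell>0$ (depending on $\ell$, on $\Omega$, and on $n$ through $\epsilon_n$) with $|u|_{2^{*}(s)-\epsilon_n,s,\Om}\ge c_\ell\|u\|$ for all $u\in E_\ell$. Discarding the nonnegative term $\frac{\mu}{2^{*}(t)-\epsilon_n}\int_{\Omega}|u|^{2^{*}(t)-\epsilon_n}/|y|^{t}\ge0$ in \eqref{1.4}, we obtain for $u\in E_\ell$
$$
I_{\epsilon_n}(u)\le \frac12\|u\|^2-\frac{c_\ell^{\,2^{*}(s)-\epsilon_n}}{2^{*}(s)-\epsilon_n}\,\|u\|^{2^{*}(s)-\epsilon_n}.
$$

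Finally, since $2^{*}(s)=\frac{2(N-s)}{N-2}>2$ because $s<2$, and $\epsilon_0$ was fixed small enough that $2^{*}(s)-\epsilon_0>2$, the exponent $2^{*}(s)-\epsilon_n$ is strictly larger than $2$, so the right-hand side tends to $-\infty$ as $\|u\|\to\infty$ with $u\in E_\ell$; this is exactly the assertion. There is no genuine obstacle here: the only points needing a word of justification are the integrability of $|y|^{-s}$ over the bounded set $\Omega$ (which makes $|\cdot|_{q,s,\Om}$ a well-defined norm on $E_\ell$, so that norm equivalence applies) and the a priori choice of $\epsilon_0$ keeping the exponents above $2$.
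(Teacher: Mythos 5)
Your proof is correct and follows essentially the same route as the paper: exploit the finite dimensionality of $E_\ell$ to compare $\|\cdot\|$ with a weighted Lebesgue norm, drop the nonnegative terms, and conclude from the superquadratic exponent $2^{*}(s)-\epsilon_n>2$. The only (cosmetic) difference is that you work with the single norm $|\cdot|_{2^{*}(s)-\epsilon_n,s,\Omega}$ rather than the combined norm $\|\cdot\|_{n,*}$ used in the paper, which if anything makes the final inequality $I_{\epsilon_n}(u)\leq\frac{1}{2}\|u\|^{2}-c\|u\|^{2^{*}(s)-\epsilon_n}$ slightly more transparent; your observation that $\lambda_1>1$ is not needed here is also consistent with the paper's argument.
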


\begin{proof}
For each $n$, $\|\cdot\|_{n,*}$ defines a norm on $H_0^1(\Om)$. Now since $E_\ell$ is finite dimensional, there exists
a constant $C>0$ such that $\|u\|\leq C\|u\|_{*,n}$ for all $u\in E_\ell$. So,
\begin{align*}
I_{\epsilon_n}(u)&\leq \frac{1}{2}\int_{\Omega}|\nabla u|^{2}-\frac{\mu}{2^{*}(t)-\epsilon_n}\int_{\Omega}\frac{|u|^{2^{*}(t)-\epsilon_n}}{|y|^{t}}
-\frac{1}{2^{*}(s)-\epsilon_n}\int_{\Omega}\frac{|u|^{2^{*}(s)-\epsilon_n}}{|y|^{s}}\\
&\leq \frac{1}{2}\|u\|^2-C\|u\|^{2^{*}(s)-\epsilon_n}.
\end{align*}
As a result,
$$
\lim\limits_{\|u\|\rightarrow\infty,u\in E_\ell}I_{\epsilon_n}(u)=-\infty,
$$
since $2^{*}(s)-\epsilon_n>2$. Then we have proved the desired result.
\end{proof}

\begin{prop}\label{prop2.4}
For any $\alpha_1, \alpha_2>0$, there exists $\alpha_3$ depending on $\alpha_1$ and $\alpha_2$ such that
$\|u\|\leq \al_3$ for all $u\in I_{\epsilon_n}^{\al_1} \cap \{u\in H_0^1(\Om):\,\|u\|_{*,n}\leq \al_2\}$, where
$I_{\epsilon_n}^{\al_1}=\{u\in H_0^1(\Om):\,I_{\epsilon_n}(u)\leq \al_1\}$.
\end{prop}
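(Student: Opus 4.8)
The plan is to extract a bound on $\|u\|$ directly from the energy inequality $I_{\epsilon_n}(u)\le\alpha_1$, using the constraint $\|u\|_{*,n}\le\alpha_2$ to control the critical terms and the first eigenvalue to control the quadratic term; recall we are in the case $\lambda_1>1$, the case $\lambda_1\le1$ being already settled by Lemma~\ref{lem2.1}. First, rewriting the definition \eqref{1.4} of $I_{\epsilon_n}$,
\[
\frac12\|u\|^2=I_{\epsilon_n}(u)+\frac12\int_\Omega a(x)u^2+\frac{\mu}{2^*(t)-\epsilon_n}\int_\Omega\frac{|u|^{2^*(t)-\epsilon_n}}{|y|^t}+\frac{1}{2^*(s)-\epsilon_n}\int_\Omega\frac{|u|^{2^*(s)-\epsilon_n}}{|y|^s},
\]
where the first term on the right is $\le\alpha_1$ by hypothesis.

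Next I would bound the two integral terms using $\|u\|_{*,n}\le\alpha_2$. By definition of $\|\cdot\|_{*,n}$ each of its two (nonnegative) summands is $\le\alpha_2$, so when $\mu>0$ we get $|u|_{2^*(t)-\epsilon_n,t,\Omega}\le\alpha_2/\mu$ and $|u|_{2^*(s)-\epsilon_n,s,\Omega}\le\alpha_2$, whence $\mu\int_\Omega\frac{|u|^{2^*(t)-\epsilon_n}}{|y|^t}\le\mu(\alpha_2/\mu)^{2^*(t)-\epsilon_n}$ and $\int_\Omega\frac{|u|^{2^*(s)-\epsilon_n}}{|y|^s}\le\alpha_2^{\,2^*(s)-\epsilon_n}$; when $\mu=0$ the first term is simply absent. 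Since $\epsilon_n\in(0,\epsilon_0)$ with $\epsilon_0$ fixed, the exponents $2^*(t)-\epsilon_n$, $2^*(s)-\epsilon_n$ and the coefficients $\tfrac{1}{2^*(t)-\epsilon_n}$, $\tfrac{1}{2^*(s)-\epsilon_n}$ stay in a fixed bounded range, so these two terms together are bounded by a constant $C_1=C_1(\mu,\alpha_2,\epsilon_0)$ independent of $n$ and $u$. For the remaining term, the variational characterization \eqref{2.2} gives $\int_\Omega a(x)u^2\le\lambda_1^{-1}\|u\|^2$. Inserting all of this into the identity above,
\[
\frac12\|u\|^2\le\alpha_1+\frac{1}{2\lambda_1}\|u\|^2+C_1,\qquad\text{i.e.}\qquad\frac12\Bigl(1-\frac{1}{\lambda_1}\Bigr)\|u\|^2\le\alpha_1+C_1,
\]
and since $1-\lambda_1^{-1}>0$ we may take $\alpha_3:=\bigl(2(\alpha_1+C_1)/(1-\lambda_1^{-1})\bigr)^{1/2}$.

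There is no essential difficulty here; the argument is purely a coercivity estimate. The two points to keep an eye on are that all constants be uniform in $n$ — which holds because $\epsilon_n$ never leaves the fixed interval $(0,\epsilon_0)$, so the exponents and coefficients appearing in $I_{\epsilon_n}$ vary only over a compact set — and that the standing assumption $\lambda_1>1$ is exactly what makes the quadratic part $\tfrac12\|u\|^2-\tfrac12\int_\Omega a(x)u^2$ coercive, hence what makes $\alpha_3$ finite.
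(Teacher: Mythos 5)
Your proof is correct and follows essentially the same route as the paper: bound the two weighted integrals by a constant using $\|u\|_{*,n}\le\alpha_2$, feed this into $I_{\epsilon_n}(u)\le\alpha_1$, and use \eqref{2.2} with $\lambda_1>1$ to make the quadratic part coercive. The only (cosmetic) difference is that you bound each summand of $\|\cdot\|_{*,n}$ separately, whereas the paper extracts the bound on the sum of the two integrals from the full norm at once.
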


\begin{proof}
Noting that $\|u\|_{*,n}\leq \al_2$, we have
$$
\mu^{2^{*}(t)-\epsilon_n}\int_{\Omega}\frac{|u|^{2^{*}(t)-\epsilon_n}}{|y|^{t}}+
\Big(\int_{\Omega}\frac{|u|^{2^{*}(s)-\epsilon_n}}{|y|^{s}}\Big)^{\frac{2^{*}(t)-\epsilon_n}{2^{*}(s)-\epsilon_n}}
\leq \al_2^{2^{*}(t)-\epsilon_n},
$$
and then
$$
\mu \int_{\Omega}\frac{|u|^{2^{*}(t)-\epsilon_n}}{|y|^{t}}
+\int_{\Omega}\frac{|u|^{2^{*}(s)-\epsilon_n}}{|y|^{s}}\leq C\al_2^{2^{*}(t)-\epsilon_n}.
$$
So from the fact that $I_{\epsilon_n}(u)\leq \al_1$, we find
\begin{equation}\label{2.9}
\frac{1}{2}\int_{\Omega}\big(|\nabla u|^{2}-a(x)
u^{2}\big)+\mu\Big(\frac{1}{2^*(s)-\epsilon_n}-\frac{1}{2^*(t)-\epsilon_n}\Big)
\int_{\Omega}\frac{|u|^{2^{*}(t)-\epsilon_n}}{|y|^{t}} \leq\al_1+ C\al_2^{2^{*}(t)-\epsilon_n}.
\end{equation}
Using \eqref{2.9} and \eqref{2.2}, we have
$$
\frac{1}{2}\Big(1-\frac{1}{\la_1}\Big)\int_{\Omega}|\nabla u|^{2}+\mu\Big(\frac{1}{2^*(s)-\epsilon_n}-\frac{1}{2^*(t)-\epsilon_n}\Big)
\int_{\Omega}\frac{|u|^{2^{*}(t)-\epsilon_n}}{|y|^{t}} \leq C,
$$
which implies our desired result.\emph{}
\end{proof}

\begin{lem}\label{lem2.5}
Let $\la_1>1$. Then for each $n$, \eqref{1.3} has infinitely many sign-changing solutions $\{u_{n,l}\}_{l=1}^\infty$ such that
for each $l$, the sequence $\{u_{n,l}\}$ is bounded in $H_0^1(\Om)$ and the augmented Morse index of $\{u_{n,l}\}$ is greater than
or equal to $l$.
\end{lem}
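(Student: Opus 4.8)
The plan is to apply the abstract linking theorem of Schechter and Zou (\cite{sz}, Theorem 2) to the modified functional $I_{\epsilon_n}$ on the Banach space $H_0^1(\Om)$, with the convex cone $\mathcal{P}$ of nonnegative functions and its antipode $-\mathcal{P}$ playing the role of the ``positive'' and ``negative'' admissible sets. The four structural hypotheses of that theorem are exactly what Propositions \ref{prop2.2}--\ref{prop2.4} have been designed to supply: Proposition \ref{prop2.2} gives the invariance of the neighbourhoods $\pm D(\rho_0)$ of the cones under the pseudo-gradient flow (through the contraction property of $\mathcal{A}_n$ on $D(\rho_0)$) together with the crucial fact that the only critical points lying in $\pm D(\rho_0)$ are the (trivial, or one-signed) ones in $\mathcal{P}$; Proposition \ref{prop2.3} is the coercivity-from-below along the finite-dimensional spaces $E_\ell$, which produces the linking geometry in each dimension; and Proposition \ref{prop2.4} is the a priori bound needed to verify that the functional is bounded on the relevant sets and that the Palais--Smale sequences produced by the min-max values stay in a fixed ball. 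Since $I_{\epsilon_n}\in C^2$ is even and, by the subcriticality of the exponents $2^*(t)-\epsilon_n$ and $2^*(s)-\epsilon_n$ (recall $\epsilon_n>0$), satisfies $(PS)$ at every level, the abstract theorem applies and yields, for each $\ell$, a critical value $c_{n,\ell}\to\infty$ with a corresponding critical point $u_{n,\ell}\notin\pm D(\rho_0)$, hence $u_{n,\ell}$ changes sign; this reproves the symmetric mountain-pass conclusion already recorded above, but now with the extra sign information built in.

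Next I would extract the Morse-index lower bound, which is the genuinely new ingredient. Here I use the second, refined part of the Schechter--Zou theorem: the min-max scheme over $\ell$-dimensional symmetric sets linking $\partial D(\rho_0)$ delivers a critical point whose \emph{augmented} Morse index $m^*(u_{n,\ell})$ is at least $\ell$. Concretely, one argues by contradiction: if $m^*(u_{n,\ell})\le \ell-1$, then the negative-plus-null space of $I''_{\epsilon_n}(u_{n,\ell})$ has dimension $<\ell$, and a standard deformation along the positive-definite directions would allow the $\ell$-dimensional linking set $E_\ell\cap(\text{sphere})$ to be pushed below $c_{n,\ell}$ while staying outside $\pm D(\rho_0)$, contradicting the definition of $c_{n,\ell}$ as the min-max value. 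The even structure of $I_{\epsilon_n}$ and the $\Z_2$-cohomological index (or genus) underlying the construction are what force the dimension count; this is precisely the mechanism used in \cite{mb,g,sz}, and the only adaptation required is that all the estimates feeding the deformation lemma (the $(PS)$ condition, the boundedness from Proposition \ref{prop2.4}, and the flow-invariance from Proposition \ref{prop2.2}) have already been established here in the presence of the two Hardy--Sobolev--Maz'ya terms.

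Finally, I would record the boundedness of $\{u_{n,\ell}\}$ in $n$ for each fixed $\ell$. The point is that $c_{n,\ell}$ is defined by a min-max over a family of sets that does not depend on $n$ (it depends only on $E_\ell$, on $\rho_0$, which by Proposition \ref{prop2.2} can be chosen uniformly in $n$ since $\lambda_1>1$ is fixed, and on the comparison constants of Proposition \ref{prop2.3}, which are likewise $n$-uniform because $\epsilon_n\downarrow0$ so the relevant exponents stay in a compact subinterval of $(2,2^*(s))$). Hence $c_{n,\ell}\le C_\ell$ with $C_\ell$ independent of $n$. Then $I_{\epsilon_n}(u_{n,\ell})\le C_\ell$, and combining the equation $\langle I'_{\epsilon_n}(u_{n,\ell}),u_{n,\ell}\rangle=0$ with the energy identity (as in the proof of Proposition \ref{prop2.4}, subtracting a suitable multiple of the Nehari identity from $I_{\epsilon_n}(u_{n,\ell})$ and using $\lambda_1>1$ to absorb the linear term) gives a bound on $\|u_{n,\ell}\|$ depending only on $C_\ell$, hence on $\ell$ but not on $n$.

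\textbf{Main obstacle.} I expect the hardest part is not the abstract machinery itself but verifying, with full care, that the constants in Propositions \ref{prop2.2}--\ref{prop2.4} — in particular $\rho_0$ in the invariance estimate \eqref{2.8} and the embedding constant relating $\|\cdot\|$ and $\|\cdot\|_{*,n}$ on $E_\ell$ — can be taken independent of $n$; this uniformity is exactly what is needed for the min-max values $c_{n,\ell}$ to be bounded in $n$, which in turn is the input for the limiting argument (passing $n\to\infty$) carried out in Section \ref{s3}. The double critical structure makes these estimates more delicate than in the single-term case of \cite{sz}, since one must track two H\"older pairings with two different weights $|y|^{-t}$ and $|y|^{-s}$ simultaneously, but the exponents are ordered ($0\le t<s<2$, so $2^*(s)<2^*(t)$) and for small $\rho_0$ the lower power dominates, which is what makes \eqref{2.8} go through.
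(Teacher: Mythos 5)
Your proposal is correct and follows essentially the same route as the paper: Propositions \ref{prop2.2}--\ref{prop2.4} are used to invoke Theorem 2 of \cite{sz}, which directly yields the sign-changing critical points $u_{n,l}$ with $m^*(u_{n,l})\geq l$ and $c_{n,l}\leq\sup_{E_{l+1}}I_{\epsilon_n}$, and the paper then bounds this sup uniformly in $n$ (explicitly, by $T_1\la_{l+1}^{\frac{2^*(s)-\epsilon_0}{2(2^*(s)-\epsilon_0-2)}}$, using exactly the exponent-comparison $|u|_{2^*(s)-\epsilon_0,s,\Om}\leq C_1|u|_{2^*(s)-\epsilon_n,s,\Om}+C_2$ you describe) before deriving the $H_0^1$ bound from the Nehari identity and $\la_1>1$, just as you propose. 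The only cosmetic difference is that you sketch a deformation argument for the Morse-index lower bound, whereas the paper simply reads it off from the abstract theorem.
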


\begin{proof}
Firstly, it follows from Propositions \ref{prop2.2}, \ref{prop2.3} and \ref{prop2.4} that
$I_{\epsilon_n}$ satisfies all the conditions of Theorem 2 in \cite{sz}. Thus $I_{\epsilon_n}$ has a sign-changing
critical point $u_{n,l}\in H_0^1(\Om)$ at the level $c_{n,l}$ with $c_{n,l}\leq \sup_{E_{l+1}}I_{\epsilon_n}$
and $m^*(u_{n,l})\geq l$. Now it remains to show that the sequence $\{u_{n,l}\}$ is bounded for each $l$.

Claim: there exists $T_1>0$ independent of $n$ and $l$ such that
$$
c_{n,l}\leq T_1\la_{l+1}^{\frac{2^*(s)-\epsilon_0}{2(2^*(s)-\epsilon_0-2)}}.
$$

To see this, since $2^*(s)-\epsilon_0>2$, we have
\begin{equation}\label{2.10}
\|u\|^2\leq C\la_{l+1}|u|_2^2\leq C\la_{l+1}|u|^2_{2^*(s)-\epsilon_0,s,\Om},\,\, u\in E_{l+1},
\end{equation}
where $C>0$ is a constant independent of $n,l$.
Moreover, from the fact that $2^*(s)-\epsilon_0<2^*(s)-\epsilon_n$, we get that there exist $C_1, C_2>0$ independent of
$n, l$ such that
$$
|u|_{2^*(s)-\epsilon_0,s,\Om}\leq C_1|u|_{2^*(s)-\epsilon_n,s,\Om}+C_2.
$$
Therefore,
\begin{equation}\label{2.11}
\begin{array}{ll}
I_{\epsilon_n}(u)&\leq \ds\frac{1}{2}\ds\int_{\Omega}|\nabla u|^{2}
-\ds\frac{1}{2^{*}(s)-\epsilon_n}\ds\int_{\Omega}\frac{|u|^{2^{*}(s)-\epsilon_n}}{|y|^{s}}\vspace{0.15cm}\\
&\leq \ds\frac{1}{2}\|u\|^2
-C_3\ds\int_{\Omega}\frac{|u|^{2^{*}(s)-\epsilon_0}}{|y|^{s}}+C_4,
\end{array}
\end{equation}
where $C_3,C_4>0$ independent of $n,l$.

So from \eqref{2.10} and \eqref{2.11}, we obtain for any $u\in E_{l+1}$,
\begin{equation}\label{2.11.1}
\begin{array}{ll}
I_{\epsilon_n}(u)&\leq \frac{1}{2}\|u\|^2
-C_5\la_{l+1}^{-\frac{2^*(s)-\epsilon_0}{2}}\|u\|^{2^*(s)-\epsilon_0}+C_4\vspace{0.12cm}\\
&\leq C_6\la_{l+1}^{\frac{2^*(s)-\epsilon_0}{2(2^*(s)-\epsilon_0-2)}}+C_4
\leq T_1\la_{l+1}^{\frac{2^*(s)-\epsilon_0}{2(2^*(s)-\epsilon_0-2)}},
\end{array}
\end{equation}
where $C_i>0(i=1,\cdots,6)$ and $T_1>0$ independent of $n,l$.

Moreover, we have
\begin{equation}\label{2.11.2}
\begin{array}{ll}
I_{\epsilon_n}(u_{n,l})&=\ds\frac{1}{2}\int_{\Omega}\big(|\nabla u_{n,l}|^{2}-a(x)
u_{n,l}^{2}\big)-\frac{\mu}{2^{*}(t)-\epsilon_n}\int_{\Omega}\frac{|u_{n,l}|^{2^{*}(t)-\epsilon_n}}{|y|^{t}}
-\frac{1}{2^{*}(s)-\epsilon_n}\int_{\Omega}\frac{|u_{n,l}|^{2^{*}(s)-\epsilon_n}}{|y|^{s}}
\vspace{0.12cm}\\
&=\Big(\ds\frac{1}{2}-\frac{1}{2^{*}(t)-\epsilon_n}\Big)\mu\int_{\Omega}\frac{|u_{n,l}|^{2^{*}(t)-\epsilon_n}}{|y|^{t}}
+\Big(\frac{1}{2}-\frac{1}{2^{*}(s)-\epsilon_n}\Big)\int_{\Omega}\frac{|u_{n,l}|^{2^{*}(s)-\epsilon_n}}{|y|^{s}}
\vspace{0.12cm}\\
&\geq 0.
\end{array}
\end{equation}

Then we conclude from \eqref{2.11.1} and \eqref{2.11.2}
that
$I_{\epsilon_n}(u_{n,l})\in \big(0, T_1\la_{l+1}^{\frac{2^*(s)-\epsilon_0}{2(2^*(s)-\epsilon_0-2)}}\big]$.
So,
\begin{align*}
I_{\epsilon_n}(u_{n,l})&= I_{\epsilon_n}(u_{n,l})-
\frac{1}{2^*(s)-\epsilon_n}\left \langle I'_{\epsilon_n}(u_{n,l}),u_{n,l}\right\rangle\\
&=\Big(\frac{1}{2}-\frac{1}{2^*(s)-\epsilon_n}\Big)\int_{\Omega}\big(|\nabla u_{n,l}|^{2}-a(x)u^{2}\big)\\
&\quad+\mu\Big(\frac{1}{2^*(s)-\epsilon_n}-\frac{1}{2^*(t)-\epsilon_n}\Big)
\int_{\Omega}\frac{|u_{n,l}|^{2^{*}(t)-\epsilon_n}}{|y|^{t}}\\
&\geq \Big(\frac{1}{2}-\frac{1}{2^*(s)-\epsilon_0}\Big)\int_{\Omega}\Big(|\nabla u_{n,l}|^{2}-a(x)u^{2}\Big)\\
&\geq \Big(\frac{1}{2}-\frac{1}{2^*(s)-\epsilon_0}\Big)\big(1-\frac{1}{\la_1}\big)\int_{\Omega}|\nabla u_{n,l}|^{2},
\end{align*}
which completes our proof.
\end{proof}

\section{proof of the main results }\label{s3}

In this section, we will prove our main results. Firstly,
in order to prove the existence result of infinitely many sign changing solutions,
 we introduce the following result given in \cite{wyj}.

\begin{thm}\label{thm1.2}
Suppose that $a(x)>0$ and
$\Omega$ satisfies the same geometric conditions as imposed in Theorem \ref{thm1.1}.
If $N>6+t$ when $\mu>0,$ and $N>6+s$ when $\mu=0,$ then for any $u_{n},$ which is a solution of \eqref{1.3}
with $\epsilon_{n}\rightarrow 0,$ satisfying $\|u_{n}\|\leq C$
for some constant independent of $n,u_{n}$ converges strongly in $H_{0}^{1}(\Omega)$
as $n\rightarrow +\infty.$
\end{thm}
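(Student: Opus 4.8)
The plan is to carry out a standard concentration-compactness / blow-up analysis for the perturbed critical problem, showing that the only possible loss of compactness comes from bubbles concentrating either at the origin $(0,z^*)$ on $\partial\Omega$ (when it lies there) or at a point of $\Omega$, and that in the dimension range $N>6+t$ (resp. $N>6+s$) the energy/geometry obstructs such bubbling. First I would record the basic consequences of the uniform bound $\|u_n\|\le C$: up to a subsequence $u_n\rightharpoonup u_0$ weakly in $H^1_0(\Omega)$, $u_n\to u_0$ strongly in $L^2(\Omega)$ and a.e., and the nonlinear terms $\frac{\mu|u_n|^{2^*(t)-2-\epsilon_n}u_n}{|y|^t}$ and $\frac{|u_n|^{2^*(s)-2-\epsilon_n}u_n}{|y|^s}$ are bounded in $L^{(2^*(t))'}_t$ and $L^{(2^*(s))'}_s$ respectively; passing to the limit in the weak formulation of \eqref{1.3}, $u_0$ solves \eqref{1.1}. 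The point is then to upgrade the weak convergence to strong convergence, i.e.\ to show $\|u_n\|\to\|u_0\|$.

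Next I would invoke a Struwe-type global compactness decomposition adapted to the Hardy--Sobolev--Maz'ya setting (as in \cite{s,pl} and the HSM references \cite{bs,ms,m}): writing $v_n:=u_n-u_0$, one has $\|u_n\|^2=\|u_0\|^2+\|v_n\|^2+o(1)$, and $v_n$ carries a finite sum of rescaled HSM-extremal bubbles, each contributing at least a fixed quantum of energy $S_s^{(N-s)/(2-s)}$ (or the corresponding $\mu$-dependent quantity involving $S_t$, when $t>0$ and $\mu>0$) to $\lim_n I_{\epsilon_n}(u_n)$. Because of the weight $|y|^{-s}$, the only admissible concentration points are those lying on the subspace $\{y=0\}$; since $(0,z^*)$ is the only such point in $\bar\Omega$ (by the standing assumption), concentration can only occur there. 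To rule this out I would use two ingredients: (i) the energy identity $I_{\epsilon_n}(u_n)=(\tfrac12-\tfrac1{2^*(s)-\epsilon_n}+o(1))\|v_n\|^2+I(u_0)+o(1)\ge$ (number of bubbles)$\times$(energy quantum)$+I(u_0)$, together with the uniform upper bound $I_{\epsilon_n}(u_n)\le T_1\lambda_{l+1}^{\cdots}$ from Lemma \ref{lem2.5} — wait, that bound depends on $l$, so instead I would use (ii) a \emph{local Pohozaev identity} centred at $(0,z^*)$: testing the equation against $(x-(0,z^*))\cdot\nabla u_n\,\zeta$ with a cutoff $\zeta$, and letting first $n\to\infty$ on a fixed ball then shrinking the ball, the boundary term produces a factor $\int (x-(0,z^*))\cdot\nu\,|\partial_\nu u|^2$ which is $\le 0$ by the geometric hypothesis, while the bubble contributes a term of a definite sign; the compatibility of signs forces the bubble profile to vanish. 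The role of the dimension restriction $N>6+t$ (resp.\ $N>6+s$) enters here exactly as in \cite{bn,cp1,sz,wyj}: it guarantees that the subcritical perturbation $a(x)u$ (more precisely the term $a(x)u_0^2$, or the interaction of the bubble with the regular part) is of \emph{lower order} than the bubble's self-interaction in the Pohozaev balance, so that the linear term cannot be sacrificed to sustain a bubble.

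The key steps in order: (1) extract weak/a.e./$L^2$ limits and identify $u_0$ as a solution of \eqref{1.1}; (2) establish the HSM global-compactness decomposition $u_n=u_0+\sum_{j}(\text{bubbles})+o(1)$ in $H^1_0$, with concentration only at $(0,z^*)$; (3) assume for contradiction at least one bubble survives, and derive a local Pohozaev identity near $(0,z^*)$; (4) compute the asymptotics of each term in the Pohozaev identity as a function of the concentration scale $\lambda_n\to0$, using $N>6+t$ (resp.\ $N>6+s$) to see that the $a(x)u$-contribution is negligible compared to the leading bubble term, and use $(x-(0,z^*))\cdot\nu\le0$ to kill the boundary term; conclude the leading term must vanish — a contradiction; (5) therefore no bubbles, $\|v_n\|\to0$, and $u_n\to u_0$ strongly. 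I expect step (4) — the precise matching of exponents in $\lambda_n$ in the Pohozaev expansion, with the doubled critical terms $|y|^{-t}$ and $|y|^{-s}$ simultaneously present and the extra $\epsilon_n$-perturbation to track — to be the main obstacle, since one must handle separately the regimes $\mu>0$ (where the $t$-bubble dominates and $N>6+t$ is needed) and $\mu=0$ (where the $s$-bubble is the only one and $N>6+s$ is needed), and control the cross terms between the two weights; this is presumably where the ``complicated calculations'' alluded to in the introduction are located, and where one leans on the companion paper \cite{wyj}.
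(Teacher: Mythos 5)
First, a point of comparison: this paper does not actually prove Theorem \ref{thm1.2} --- it is quoted verbatim from the companion paper \cite{wyj} and used as a black box in Section \ref{s3}. So there is no in-paper argument to measure your proposal against; the relevant benchmark is the strategy the authors advertise in the introduction, namely the Devillanova--Solimini-type compactness arguments of \cite{cpy,cs1,ds,pw,ww,yy}. Your overall plan (weak limit solves \eqref{1.1}; rule out bubbling via local Pohozaev identities; the dimension restriction makes the $a(x)u$-term lower order) is the right family of ideas. Two remarks on the route: those references do not in fact pass through a ready-made Struwe global-compactness decomposition for the perturbed problems; they work directly with the solutions $u_n$, first proving that possible blow-up points are finite in number and then establishing uniform local integral estimates of the form $\int_{B_\rho}|u_n|^{\alpha}/|y|^{t}\le C$ for suitable $\alpha$ beyond the critical exponent --- and it is in this iteration step, not only in the final Pohozaev balance, that the condition $N>6+t$ (resp.\ $N>6+s$) is consumed. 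A bubble-tree decomposition with two distinct Hardy--Sobolev--Maz'ya weights plus an $\epsilon_n$-subcritical perturbation is not available off the shelf and would itself require proof.

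There is also a concrete error in your step (2): the claim that concentration can only occur at the single point $(0,z^*)$ is false in general. The singular set $\{y=0\}\cap\bar\Omega$ is (generically) an $(N-k)$-dimensional set, not a point, so the $s$-term can a priori concentrate at any of its points; and when $\mu>0$ and $t=0$ the first nonlinearity is the plain critical Sobolev term $\mu|u|^{2^*-2}u$, which can bubble at \emph{any} point of $\bar\Omega$, including interior points with $y\neq0$. The hypothesis $(x-(0,z^*))\cdot\nu\le0$ is only assumed near $(0,z^*)$, so your Pohozaev argument as written has no sign information at other boundary concentration points; one must treat separately interior concentration (where the cutoff kills the boundary term), boundary concentration away from the singular set (where smoothness of $\partial\Omega$ makes $(x-x_0)\cdot\nu$ quadratically small), and concentration on $\{y=0\}$. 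Until the proposal accounts for all of these cases, and supplies the uniform integral estimates needed to pass to the limit in the localized Pohozaev identity, it is a sketch of the correct program rather than a proof.
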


Now we are ready to prove Theorems \ref{thm1.1} and \ref{thm1.1.1}.

\noindent \textbf{ Proof of Theorem \ref{thm1.1}:}
We divide our proof into two steps.

Step~1: It follows from Lemma \ref{lem2.5} and Theorem \ref{thm1.2} that $u_{n,l}\rightarrow u_l$
in $H_0^1(\Om)$ as $n\rightarrow\infty$ and $\{u_l\}_{l=1}^\infty$ is a sequence of solutions to
\eqref{1.1} with energy $c_l\in\big[0,T_1\la_{l+1}^{\frac{2^*(s)-\epsilon_0}{2(2^*(s)-\epsilon_0-2)}}\big]$.

Now we claim that $u_l$ is still sign-changing solution to \eqref{1.1}. In fact,
since $I'_{\epsilon_n}(u_{n,l})=0$, we have
 $$
 \int_{\Omega}\big(|\nabla u_{n,l}^{\pm}|^{2}-a(x)|u_{n,l}^{\pm}|^{2}\big)=
 \mu\int_{\Omega}\frac{|u_{n,l}^{\pm}|^{2^{*}(t)-\epsilon_n}}{|y|^{t}}
 +\int_{\Omega}\frac{|u_{n,l}^{\pm}|^{2^{*}(s)-\epsilon_n}}{|y|^{s}},
 $$
and then from \eqref{2.2},
\begin{equation}\label{3.1}
\begin{array}{ll}
\Big(1-\ds\frac{1}{\la_1}\Big)\|\nabla u_{n,l}^{\pm}\|^2
&\leq \mu\ds\int_{\Omega}\frac{|u_{n,l}^{\pm}|^{2^{*}(t)-\epsilon_n}}{|y|^{t}}
 +\ds\int_{\Omega}\frac{|u_{n,l}^{\pm}|^{2^{*}(s)-\epsilon_n}}{|y|^{s}}\vspace{2mm}\\
 &\leq \mu\|\nabla u_{n,l}^{\pm}\|^{2^{*}(t)-\epsilon_n}+\|\nabla u_{n,l}^{\pm}\|^{2^{*}(s)-\epsilon_n}.
\end{array}
\end{equation}
So we can infer from \eqref{3.1} that $\|\nabla u_{n,l}^{\pm}\|\geq C>0$ for some $C$ independent of $n$.
This in turn implies that $\|\nabla u_{l}^{\pm}\|\geq C>0$.

Step~2: To complete the proof, we only need to prove that infinitely many $u_l$'s are different.
it is sufficient to check
that the energy of $u_l$ goes to infinity as $l\rightarrow \infty$.
Here we make a contradiction argument. Assume that $\lim\limits_{l\rightarrow \infty}c_l=c'<+\infty.$
 Then for each $l$, we can find $n_l>l$ such that
$|c_{n_l,l}-c_l|<\frac{1}{l}$ and $\lim\limits_{l\rightarrow \infty}c_{n_l,l}=\lim\limits_{l\rightarrow \infty}c_l=c'<+\infty$.
Therefore $u_{n_l,l}$ is bounded in $H_0^1(\Om)$ and it follows from Theorem \ref{thm1.2}
 that $u_{n_l,l}$ converges in $H_0^1(\Om)$ and
the augmented Morse index of $u_{n_l,l}$ remains bounded. This contradicts to the fact that
$m^*(u_{n_l,l})\geq l$ and completes the proof.

\noindent \textbf{ Proof of Theorem \ref{thm1.1.1}:}
To prove this theorem, we will mainly apply the Pohozaev identity motivated by \cite{bs}.

For $\epsilon>0$ and $R>0$, define $\varphi_{\epsilon,R}(x)=\varphi_\epsilon(x)\phi_R(x)$, where
$\varphi_\epsilon(x)=\varphi(\frac{|x|}{\epsilon})$, $\phi_R(x)=\phi(\frac{|x|}{R})$,
$\var$ and $\phi$ are smooth functions in $\R$ satisfying $0\leq\var, \phi\leq 1$ with supports of
$\var, \phi$ in $(1,+\infty)$ and $(-\infty, 2)$ respectively and $\var(t)=1$ for $t\geq2$ and
$\phi=1$ for $t\leq1$.

Assume that \eqref{1.1} has a nontrivial solution $u$. Then $(x\cdot\nabla u)\varphi_{\epsilon,R}(x)\in C_c^2(\Om)$.
Multiplying \eqref{1.1} by this text function and integrating by parts, we find
\begin{equation}\label{3.2}
\begin{array}{ll}
&\ds\int_{\Omega}\nabla u\nabla((x\cdot\nabla u)\varphi_{\epsilon,R})
-\ds\int_{\partial \Om}\frac{\pa u}{\pa \nu}(x\cdot\nabla u)\varphi_{\epsilon,R}dS\\
&=\mu \ds\int_{\Omega}\frac{|u|^{2^{*}(t)-2}u}{|y|^{t}}(x\cdot\nabla u)\varphi_{\epsilon,R}
+\ds\int_{\Omega}\frac{|u|^{2^{*}(s)-2}u}{|y|^{s}}(x\cdot\nabla u)\varphi_{\epsilon,R}
\\
&+\ds\int_{\Omega}a(x)u(x\cdot\nabla u)\varphi_{\epsilon,R}.
\end{array}
\end{equation}
Firstly, we can simplify the RHS of \eqref{3.2} as follows
\begin{align*}
&\mu \ds\int_{\Omega}\frac{|u|^{2^{*}(t)-2}u}{|y|^{t}}(x\cdot\nabla u)\varphi_{\epsilon,R}
+\ds\int_{\Omega}\frac{|u|^{2^{*}(s)-2}u}{|y|^{s}}(x\cdot\nabla u)\varphi_{\epsilon,R}
+\ds\int_{\Omega}a(x)u(x\cdot\nabla u)\varphi_{\epsilon,R}\\
&=-\mu(\frac{N-2}{2})\int_{\Omega}\frac{|u|^{2^{*}(t)}}{|y|^{t}}\varphi_{\epsilon,R}
-\frac{\mu}{2^*(t)}
\frac{|u|^{2^{*}(t)}}{|y|^{t}}\big(x\cdot(\var_\epsilon\nabla \phi_R+\phi_R\nabla\var_\epsilon)\big)\\
&\quad-\frac{N-2}{2}\int_{\Omega}
\frac{|u|^{2^{*}(s)}}{|y|^{s}}\varphi_{\epsilon,R}
-\frac{1}{2^*(s)}\frac{|u|^{2^{*}(s)}}{|y|^{s}}\big(x\cdot(\var_\epsilon\nabla \phi_R+\phi_R\nabla\var_\epsilon)\big)\\
&\quad-\frac{N}{2}\int_{\Omega}a(x)u^2\varphi_{\epsilon,R}-\frac{1}{2}
\int_{\Omega}a(x)u^2\big(x
\cdot(\var_\epsilon\nabla \phi_R+\phi_R\nabla\var_\epsilon)\big)\\
&\quad-\frac{1}{2}\int_{\Omega}u^2(x\cdot\nabla a)\varphi_{\epsilon,R}.
\end{align*}
Since $|x\cdot(\var_\epsilon\nabla \phi_R+\phi_R\nabla\var_\epsilon)|\leq C$, by using the dominated convergence theorem, we have
\begin{equation}\label{3.3}
\lim\limits_{R\rightarrow \infty}\lim\limits_{\epsilon\rightarrow 0}RHS
=-\frac{N-2}{2}\Big(\mu \int_{\Omega}\frac{|u|^{2^{*}(t)}}{|y|^{t}}
+\int_{\Omega}\frac{|u|^{2^{*}(s)}}{|y|^{s}}\Big)
-\frac{N}{2}\int_{\Omega}a(x)u^2-\frac{1}{2}\int_{\Omega}u^2(x\cdot\nabla a).
\end{equation}

On the other hand, by the calculation in \cite{bs}, we estimate LHS of \eqref{3.2} as
\begin{equation}\label{3.4}
\lim\limits_{R\rightarrow \infty}\lim\limits_{\epsilon\rightarrow 0}LHS
=-\frac{N-2}{2}\int_{\Omega}|\nabla u|^{2}-\frac{1}{2}\ds\int_{\partial \Om}\big(\frac{\pa u}{\pa \nu}\big)^2(x\cdot\nu)dS
\end{equation}
Therefore substituting \eqref{3.3} and \eqref{3.4} in \eqref{3.1} and using \eqref{1.1}, one has
$$
-\int_{\Omega}\big(a(x)+\frac{1}{2}x\cdot\nabla a\big)u^2+\frac{1}{2}\ds\int_{\partial \Om}\big(\frac{\pa u}{\pa \nu}\big)^2(x\cdot\nu)dS=0,
$$
which yields $u=0$ in $\Om$ by the principle of unique continuation. This completes the proof.

{\bf Acknowledgements:}
  This paper was partially supported by NSFC (No.11301204; No.11371159), self-determined research funds of CCNU from the colleges' basic research and operation of MOE (CCNU14A05036) and the excellent doctorial dissertation cultivation grant from Central
China Normal University (2013YBZD15).

\end{document}